\documentclass[12pt]{amsart}
\hfuzz=10pt
\vfuzz=10pt

%
%
%

%
%

\newcommand{\nn}{\mathbb{N}}

\newcommand{\real}{\mathop{\mathrm{Re}}}
\newcommand{\<}{\langle}
\renewcommand{\>}{\rangle}

\newtheorem{theorem}{Theorem}[section]

\newtheorem{lemma}[theorem]{Lemma}

\theoremstyle{definition}

\numberwithin{equation}{section}

\begin{document}
\title[Continuity of CP-semigroups]{Continuity of CP-semigroups in the
point-strong operator topology}
\author{Daniel Markiewicz}
\address{Department of Mathematics, University of Toronto,
 40 St. George Street, Rm 6290, Toronto, Ontario M5S 2E4, Canada.}
\email{dmarkiew@fields.utoronto.ca}
\author{Orr Moshe Shalit}
\address{Department of Mathematics, Technion - Israel
Institute of Technology, 32000, Haifa, Israel.}
\email{orrms@techunix.technion.ac.il}
\thanks{D.M. was partially supported by the Fields Institute during
the Thematic Program on Operator Algebras. O.M.S. was partially
supported by the Gutwirth Fellowship.}
\keywords{CP-semigroup, E$_0$-semigroup, strong operator continuity,
Bhat's dilation theorem, dilations.}
\subjclass[2000]{46L55, 46L57}

\date{October 3, 2007}
\begin{abstract}
We prove that if $\{\phi_t\}_{t \geq 0}$ is a CP-semigroup acting on
a von Neumann algebra $M \subseteq B(H)$, then for every $A\in M$ and $\xi \in H$,
the map $t \mapsto \phi_t(A)\xi$ is norm-continuous. We discuss the
implications of this fact to the existence of dilations of CP-semigroups to
semigroups of endomorphisms.
\end{abstract}
\maketitle

\section{Introduction}
Let $H$ be a Hilbert space, not necessarily separable, and let $M \subseteq
B(H)$ be a von Neumann algebra. A \emph{CP-semigroup} on $M$ is a family $\phi =
\{\phi_t: M \to M\}_{t\geq0}$ of contractive normal completely positive
maps which satisfies the following properties:
\begin{enumerate}
 \item $\phi_0(A) = A$, $\forall A \in M$
 \item $\phi_{s+t} = \phi_s \circ \phi_t$, $s,t\geq 0$
 \item for all $A \in M$ and $\omega \in M_*$, $\lim_{t\to t_0}
\omega(\phi_t(A)) = \omega(\phi_{t_0}(A))$
\end{enumerate}
where $M_*$ denotes the predual of $M$. We shall refer to continuity condition
(3) as \emph{continuity in the point-$\sigma$-weak
topology}. It is equivalent to \emph{continuity in the point-weak
operator topology}, i.e.
$$
\lim_{t\to t_0} \< \phi_t(A)\xi,\eta \> = \<
\phi_{t_0}(A)\xi, \eta \>, \quad A\in M, \xi, \eta \in H.
$$
A CP-semigroup $\phi$ is called an \emph{E-semigroup} if $\phi_t$ is a
$*$-endomorphism for all $t\geq 0$.

In this note we prove that CP-semigroups satisfy a seemingly stronger continuity condition, namely
\begin{equation}\label{eq:STOP}
\lim_{t\rightarrow t_0} \|\phi_t(A)\xi -\phi_{t_0}(A)\xi \| = 0 ,
\end{equation}
for all $A\in M, \xi \in H$. A semigroup satisfying (\ref{eq:STOP}) will be said
to be \emph{continuous in the point-strong operator topology}.
The proposition that CP-semigroups are continuous in the point-strong operator
topology has appeared in the literature earlier, but the proofs that
are available seem to
be incomplete. In the proofs of which we are aware, only 
continuity \emph{from the right} in the point-strong operator topology is
established. By this we mean that (\ref{eq:STOP}) holds for limits taken with
$t\searrow t_0$.

We consider the continuity of CP-semigroups in the point-strong
operator topology to be an important property, because it plays a
crucial role in the existence of dilations of CP-semigroups to
E-semigroups. We are aware of five different proofs for
the fact that every CP-semigroup has a dilation to an E-semigroup:
Bhat~\cite{Bhat1996}, Selegue~\cite{Selegue1997},
Bhat--Skeide~\cite{BhatSkeide2000}, Muhly--Solel~\cite{MuhlySolel2002} and
Arveson~\cite{Arveson2003} (some of the authors require some
additional conditions, notably that the CP-semigroup be unital or
that the Hilbert space be separable). In order to show that the
minimal dilation of a CP-semigroup to an E-semigroup is continuous
in the point-weak operator topology, all authors make use of
continuity of the CP-semigroup in the point-strong operator topology, either
implicitly or explicitly. 

This research was conducted while the first named author was at the Fields
Institute. He thanks George A. Elliott for his generous hospitality and support. The second 
named author wishes to thank Baruch Solel, his thesis advisor, for his guidance and encouragement.

\section{Preliminaries}\label{sec:preliminaries}

Let $M$ be a von Neumann algebra acting on a Hilbert space, which is not
assumed to be separable. Let $\phi = \{\phi_t : M \to M \}_{t\geq 0}$ be a
CP-semigroup acting on $M$. We denote by $M_*$ the set of
$\sigma$-weakly continuous linear functionals on $M$. We shall denote by
$\sigma(M_*,M)$ the pointwise convergence topology of $M_*$ as a subset of the
dual space of $M$.

Let $\delta$ be the generator of $\phi$, and let $D(\delta)$ denote its
domain:
$$
D(\delta) = \{ A \in M : \exists \delta(A) \in M\,  \forall \omega \in
M_* \, \lim_{t\to 0+} t^{-1} \omega(\phi_t(A) - A)=\omega(\delta(A))  \}.
$$

\begin{lemma}\label{lemma:right-cts} For every $A\in M$ and $\xi \in H$, the map
$t \mapsto \phi_t(A)\xi$ is continuous from the right (in norm).
\end{lemma}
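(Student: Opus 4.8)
The plan is to prove right-continuity of $t \mapsto \phi_t(A)\xi$ in norm, given only point-weak-operator continuity. The essential observation is that for a contractive completely positive map, control of the diagonal inner product $\langle \phi_t(A^*A)\xi,\xi\rangle$ together with weak continuity upgrades to strong continuity, via the Schwarz inequality. So I would first reduce to estimating norms using the Kadison–Schwarz inequality $\phi_t(A)^*\phi_t(A) \leq \phi_t(A^*A)$, valid for contractive CP (indeed 2-positive) maps.

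Concretely, to show $\|\phi_t(A)\xi - \phi_{t_0}(A)\xi\| \to 0$ as $t \searrow t_0$, using the semigroup law I would write $\phi_t = \phi_{t-t_0}\circ \phi_{t_0}$ for $t \geq t_0$. Setting $B = \phi_{t_0}(A)$ and $s = t - t_0 \searrow 0$, it suffices to prove right-continuity at $0$, i.e. $\|\phi_s(B)\xi - B\xi\| \to 0$ as $s \searrow 0$, for every $B \in M$ and $\xi \in H$. Expanding the square,
\begin{equation*}
\|\phi_s(B)\xi - B\xi\|^2 = \langle \phi_s(B)^*\phi_s(B)\xi,\xi\rangle - 2\real\langle \phi_s(B)\xi, B\xi\rangle + \|B\xi\|^2.
\end{equation*}
The middle and last terms converge to $2\|B\xi\|^2$ and $\|B\xi\|^2$ respectively by point-weak-operator continuity (condition (3)) as $s \searrow 0$. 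For the first term, the Schwarz inequality gives $\langle \phi_s(B)^*\phi_s(B)\xi,\xi\rangle \leq \langle \phi_s(B^*B)\xi,\xi\rangle$, and the right-hand side tends to $\langle B^*B\xi,\xi\rangle = \|B\xi\|^2$, again by weak continuity. Combining, I obtain $\limsup_{s\searrow 0}\|\phi_s(B)\xi - B\xi\|^2 \leq \|B\xi\|^2 - 2\|B\xi\|^2 + \|B\xi\|^2 = 0$, which yields right-continuity at $0$.

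The main obstacle I anticipate is purely notational: ensuring that the semigroup property applied as $\phi_t = \phi_{t-t_0}\circ\phi_{t_0}$ correctly reduces continuity at an arbitrary $t_0$ to continuity at $0$, and that the weak-operator continuity hypothesis is applied to the \emph{fixed} element $B^*B$ (and $B$) rather than to a moving target. Because the element $B = \phi_{t_0}(A)$ does not depend on $s$, condition (3) applies directly and no uniformity in $A$ is needed. I expect the argument to be short and self-contained, with the Schwarz inequality doing all the real work; the one-sided ($t \searrow t_0$) nature of the conclusion is exactly what the $\phi_t = \phi_{t-t_0}\circ\phi_{t_0}$ factorization delivers, and it is why extending this to two-sided continuity (the full statement proved later in the paper) requires genuinely more effort.
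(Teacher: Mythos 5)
Your proposal is correct and is essentially the paper's own argument (taken from Bhat's Lemma A.1): the paper likewise expands $\|\phi_{t+h}(A)\xi-\phi_t(A)\xi\|^2$, bounds the leading term via the Schwarz inequality combined with the semigroup law, $\phi_{t+h}(A)^*\phi_{t+h}(A)\leq \phi_h\bigl(\phi_t(A)^*\phi_t(A)\bigr)$, and lets point-weak operator continuity finish the job. Your reduction to $s\searrow 0$ with the fixed element $B=\phi_{t_0}(A)$ is just an explicit rephrasing of the same step.
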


The proof of this result can be found in the literature, for example as
Lemma A.1 of \cite{Bhat2001} or Proposition 4.1 item 1 in
\cite{MuhlySolel2002}. For completeness, let us present the argument from
\cite{Bhat2001}. 
Let $A \in M$, $\xi \in H$ and $t \geq 0$. For all $h>0$, we have, using the
Schwartz inequality for completely positive maps,
\begin{multline*}
\|\phi_{t+h}(A)\xi - \phi_{t}(A)\xi\| = \\
= \< \phi_{t+h}(A)^* \phi_{t+h}(A)
\xi, \xi \> - 2 \real \< \phi_{t+h}(A) \xi, \phi_{t}(A) \xi \> +
\|\phi_{t}(A)\xi \|^2 \hphantom{XXX} \\
\leq  \< \phi_h(\phi_{t}(A)^* \phi_{t}(A)) \xi, \xi \> - 2 \real \<
\phi_{t+h}(A) \xi, \phi_{t}(A) \xi \> + \|\phi_{t}(A)\xi \|^2 
\xrightarrow[h \to 0]{} 0.
\end{multline*}

We remark, however, that two-sided continuity
does not follow directly from continuity from the right. This is in contrast
with the situation of the classical theory of $C_0$-semigroups on Banach spaces
(see for example \cite{HillePhillips1974}). If $T = \{T_t\}_{t\geq 0}$ is a
semigroup of contractions on a Banach space $X$ such that the maps
$$
t \mapsto T_t (x)
$$
are continuous from the right in norm for all $x \in X$, then it is easy to show
that these maps are also continuous from the left in
norm\footnote{
for given $x \in X$, $0\leq t \leq a$, $\| T_{a-t}(x) - T_a(x) \| = \|
T_{a-t} (x - T_t(x)) \| \leq \| x - T_t(x)  \|.$
}.
In fact, when $X$ is separable, for
instance, it can be proven by measurability and integrability techniques that if
the  maps $t \mapsto f(T_t (x))$ are measurable for all $x \in X$ and $f \in
X^*$, then the maps $t \mapsto T_t(x)$ are continuous in norm for $t > 0$. In
the case of CP-semigroups on von Neumann algebras, however, these techniques
seem to require considerable modification. We provide here an alternative
approach to the problem.

Recall that a function $g: [0, 1] \to
H$ is \emph{weakly measurable} if for all $\eta \in H$, the complex-valued
function
$g_{\eta}(t)=\<\eta, g(t) \>$ is measurable. We will say that the function $g$
is \emph{strongly measurable} if there exists a family of countably-valued
functions
(i.e. assuming a set of values which is at most countable)
converging Lebesgue almost everywhere to $g$. (For more details, see Definition
3.5.4, p.~72, and the surrounding discussion in \cite{HillePhillips1974}).

\begin{lemma}\label{lemma:bochner-int}
For all $\xi\in H, A\in M$, the function $f:[0,1] \to H$ given by $f(t)
= \phi_t(A)\xi$ is strongly measurable and Bochner integrable on the interval
$[0,1]$.
\end{lemma}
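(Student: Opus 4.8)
The plan is to verify the two claims separately, drawing on the one-sided continuity already secured in Lemma~\ref{lemma:right-cts}. Write $f(t) = \phi_t(A)\xi$. The only continuity at our disposal is continuity from the right, so the construction of approximating simple functions must be arranged to exploit precisely this; this is the one delicate point, and it dictates a particular choice below.

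For strong measurability I would produce countably-valued functions converging to $f$ everywhere, matching the definition recalled above. For each $n \in \nn$ partition $[0,1)$ into the dyadic intervals $I_{n,k} = [k 2^{-n}, (k+1)2^{-n})$, $0 \le k \le 2^n - 1$, and define the simple function
$$
f_n(t) = f\big((k+1)2^{-n}\big) \quad \text{for } t \in I_{n,k}, \qquad f_n(1) = f(1).
$$
Each $f_n$ assumes only finitely many values and is constant on intervals, hence is measurable and countably-valued. Fix $t \in [0,1)$ and let $k_n$ be the index with $t \in I_{n,k_n}$; then the point $s_n = (k_n+1)2^{-n}$ satisfies $t < s_n \le t + 2^{-n}$, so $s_n \searrow t$. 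By Lemma~\ref{lemma:right-cts}, $f_n(t) = f(s_n) \to f(t)$ in norm. Since $f_n(1) = f(1)$ as well, $f_n \to f$ pointwise on all of $[0,1]$, and therefore $f$ is strongly measurable. I note that right endpoints are essential here: left endpoints would require continuity from the left, which we do not yet have.

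For Bochner integrability I would invoke the standard criterion that a strongly measurable function is Bochner integrable exactly when $t \mapsto \|f(t)\|$ is Lebesgue integrable. The composition of $\|\cdot\|$ with the strongly measurable $f$ is measurable, and since each $\phi_t$ is contractive,
$$
\|f(t)\| = \|\phi_t(A)\xi\| \le \|\phi_t(A)\|\,\|\xi\| \le \|A\|\,\|\xi\|.
$$
Thus $\|f(\cdot)\|$ is bounded on the finite-measure interval $[0,1]$, hence integrable, and the criterion yields Bochner integrability.

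The main obstacle, such as it is, lies entirely in the measurability step: everything hinges on having the approximating simple functions genuinely converge to $f$, and since only right-continuity is available, the approximation must be built from values of $f$ taken to the right of each point. An alternative route would invoke the Pettis measurability theorem: weak measurability is immediate from the point-weak continuity hypothesis, and right-continuity confines the range of $f$ to the closed linear span of the countable set $\{f(q) : q \in \mathbb{Q} \cap [0,1]\}$, giving essential separable-valuedness. The explicit simple-function construction above is, however, self-contained and already delivers the definition used in the excerpt.
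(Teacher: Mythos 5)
Your proof is correct, and the measurability step takes a genuinely different route from the paper's. The paper argues indirectly: $f$ is weakly continuous (hence weakly measurable) because $\phi$ is continuous in the point-weak operator topology, while norm right-continuity (Lemma~\ref{lemma:right-cts}) forces the range of $f$ into a separable subspace of $H$; the Pettis-type measurability theorem (Theorem 3.5.3 of \cite{HillePhillips1974}) then upgrades ``weakly measurable and separably valued'' to strongly measurable. In other words, what you sketch at the end as an ``alternative route'' is in fact the paper's actual argument. Your main construction instead produces the countably-valued approximants explicitly --- dyadic partitions sampled at \emph{right} endpoints, so that $f_n(t)=f(s_n)$ with $s_n\searrow t$ --- and lets norm right-continuity drive the pointwise convergence directly. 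This is more elementary and self-contained: it uses only Lemma~\ref{lemma:right-cts}, never invokes weak continuity or any citation for the measurability step, and it even yields convergence everywhere rather than almost everywhere; the paper's route is shorter on the page but outsources the work to Hille--Phillips. The integrability half is essentially the same in both treatments: strong measurability plus $\int_0^1 \|f(t)\|\,dt<\infty$ (you get the bound $\|f(t)\|\le\|A\|\,\|\xi\|$ from contractivity of $\phi_t$; the paper notes that $\|f(\cdot)\|$ is bounded and right-continuous) gives Bochner integrability by the standard criterion, Theorem 3.7.4 of \cite{HillePhillips1974}.
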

\begin{proof}  The function $f$ is weakly continuous, since
$\phi$ is continuous in the point-weak operator topology. In particular, it is
weakly measurable.  Furthermore, by Lemma \ref{lemma:right-cts}, the function
$f$ is continuous from the right in norm, hence it is separably valued (i.e.,
its range is contained in a separable subspace of $H$). By Theorem~3.5.3 of
\cite{HillePhillips1974}, the function $f$ is strongly measurable because it is
is weakly measurable and separably valued.

Thanks to Theorem 3.7.4, p.~80 of \cite{HillePhillips1974}, in order to show
that $f$ is Bochner integrable it is enough to show that $f$ is strongly
measurable and that
$$\int_0^1 \|f(t)\|dt < \infty.$$
The latter condition is easy to verify, as $t \mapsto \|f(t)\|$ is a
right-continuous, bounded function on $[0,1]$.
\end{proof}

We thank Michael Skeide for the idea to use the continuity of $f$ from the
right in order to avoid making the assumption that $H$ is separable.

\begin{lemma}\label{lemma:domain-density}
Let $A \in B(H)$ be positive. Then there exists a sequence $A_n \in
D(\delta)$ of positive operators such that $A_n \to A$ in the $\sigma$-strong*
topology.
\end{lemma}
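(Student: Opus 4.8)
The plan is to manufacture the approximants by averaging the semigroup against a window shrinking to the origin, the standard device for landing inside the domain of a generator. For the given positive operator $A$ (which necessarily lies in $M$, since $\phi_t(A)$ is only defined there) and each integer $n\geq 1$, I would set
$$
A_n\xi = n\int_0^{1/n}\phi_t(A)\xi\,dt,\qquad \xi\in H,
$$
a Bochner integral that is legitimate because $t\mapsto\phi_t(A)\xi$ is strongly measurable and integrable on $[0,1]$ by Lemma \ref{lemma:bochner-int}. Each $\phi_t(A)$ is positive, so $\langle A_n\xi,\xi\rangle=n\int_0^{1/n}\langle\phi_t(A)\xi,\xi\rangle\,dt\geq 0$ and hence $A_n\geq 0$, while contractivity of $\phi_t$ gives $\|A_n\|\leq\|A\|$. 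To see that $A_n\in M$, note that any $U\in M'$ passes through the integral, $UA_n\xi=n\int_0^{1/n}\phi_t(A)U\xi\,dt=A_nU\xi$, so $A_n\in M''=M$. Writing a general $\omega\in M_*$ as $\omega(X)=\sum_k\langle X\xi_k,\eta_k\rangle$ with $\sum_k\|\xi_k\|^2,\sum_k\|\eta_k\|^2<\infty$ and interchanging the (absolutely convergent) sum with the integral, I obtain the scalar identity $\omega(A_n)=n\int_0^{1/n}\omega(\phi_t(A))\,dt$.

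The membership $A_n\in D(\delta)$ is the crux. Using normality of $\phi_s$ (so $\omega\circ\phi_s\in M_*$), the semigroup law, and the substitution $u=s+t$, I would first establish the shifted formula $\omega(\phi_s(A_n))=n\int_s^{s+1/n}\omega(\phi_u(A))\,du$, whence for small $s>0$,
$$
\omega(\phi_s(A_n)-A_n)=n\Bigl(\int_{1/n}^{s+1/n}-\int_0^s\Bigr)\omega(\phi_u(A))\,du.
$$
Dividing by $s$ and letting $s\to 0+$, the continuity of $u\mapsto\omega(\phi_u(A))$ together with the fundamental theorem of calculus yields
$$
\lim_{s\to 0+}s^{-1}\omega(\phi_s(A_n)-A_n)=n\,\omega\bigl(\phi_{1/n}(A)-A\bigr).
$$
Since $n(\phi_{1/n}(A)-A)\in M$, this is precisely the assertion that $A_n\in D(\delta)$, with $\delta(A_n)=n(\phi_{1/n}(A)-A)$.

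For the convergence I would invoke right-continuity (Lemma \ref{lemma:right-cts}). Since $n\int_0^{1/n}dt=1$, one has $A_n\xi-A\xi=n\int_0^{1/n}(\phi_t(A)\xi-A\xi)\,dt$, so $\|A_n\xi-A\xi\|\leq n\int_0^{1/n}\|\phi_t(A)\xi-A\xi\|\,dt$; as the integrand tends to $0$ when $t\to 0+$, so does this average, giving $A_n\xi\to A\xi$ for every $\xi$, i.e.\ $A_n\to A$ strongly. Both $A_n$ and $A$ are self-adjoint, so the adjoints converge as well and $A_n\to A$ in the strong$^*$ topology; since $\|A_n\|\leq\|A\|$ and the strong$^*$ and $\sigma$-strong$^*$ topologies coincide on norm-bounded sets, $A_n\to A$ in the $\sigma$-strong$^*$ topology, as required.

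The step I expect to be the main obstacle is the domain computation of the second paragraph: one must carefully justify moving $\phi_s$ inside the integral through normality, and then differentiate the resulting scalar integral at the origin from the right, checking that the limit operator $n(\phi_{1/n}(A)-A)$ indeed belongs to $M$ and represents $\delta(A_n)$ in the $\sigma$-weak sense demanded by the definition of $D(\delta)$. Everything else---membership in $M$, positivity, and the approximate-identity estimate for convergence---is routine once the integral is in place. (Alternatively one could use the resolvent average $\lambda\int_0^\infty e^{-\lambda t}\phi_t(A)\,dt$ with $\lambda\to\infty$, at the cost of first extending Lemma \ref{lemma:bochner-int} to $[0,\infty)$ against the weight $e^{-\lambda t}$.)
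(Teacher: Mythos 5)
Your proposal is correct and follows essentially the same route as the paper: both construct the Ces\`aro averages $A_n = n\int_0^{1/n}\phi_t(A)\,dt$, get positivity and the bound $\|A_n\|\leq\|A\|$ from the CP/contractive structure, deduce strong convergence $A_n\xi\to A\xi$ from right-continuity (Lemma~\ref{lemma:right-cts}) via the Bochner integral of Lemma~\ref{lemma:bochner-int}, and upgrade to $\sigma$-strong$^*$ convergence using self-adjointness and norm-boundedness. The only difference is one of detail, not of method: the paper simply ``recalls'' that $A_n\in D(\delta)$ as a standard fact about weakly continuous semigroups, whereas you verify it explicitly (correctly, via normality of $\phi_s$, the semigroup law, and differentiation of the shifted scalar integral, yielding $\delta(A_n)=n(\phi_{1/n}(A)-A)$).
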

\begin{proof}
Recall that the sequence
$$
A_n = n \int_0^{1/n} \phi_t(A) dt
$$
(integral taken in the $\sigma$-weak sense) converges in the
$\sigma$-weak topology to $A$. Furthermore $A_n \in D(\delta)$ and it is a
positive operator for each $n>0$ since $\phi_t$ is a CP map for all $t$. It is
easy to check that $\| A_n \| \leq \|A \|$ for all $n$ since $\phi_t$ is
contractive.

Now observe that for each $\xi \in H$, the map $t \mapsto \phi_t(A) \xi$ is
Bochner integrable on $[0,1]$ (see Lemma~\ref{lemma:bochner-int}), hence in
fact we have
$$
A_n \xi = n \int_0^{1/n} \phi_t(A)\xi dt
$$
where the integral is taken in the Bochner sense. The identity holds because
for all $\eta \in H$, $n\in \nn$ we have:
$$
\< A_n \xi , \eta \> = n \int_0^{1/n} \<\phi_t(A)\xi, \eta \> dt = \<
 n \int_0^{1/n} \phi_t(A)\xi dt, \eta \>.
$$

We now show that $A_n \to A$ strongly. Let $\xi \in H$ be fixed.
\begin{align*}
\|A\xi -  A_n \xi\| &= \|n \int_0^{1/n}A\xi dt  -
n \int_0^{1/n}\phi_t(A)\xi  dt \| \\
&\leq n \int_0^{1/n} \|A\xi - \phi_t(A)\xi \|dt  .
\end{align*}
The latter goes to zero by continuity from the
right (Lemma~\ref{lemma:right-cts}).
Since $A_n$, $A$ are positive operators, by considering adjoints we obtain that
$A_n \to A$ in the strong* topology. Finally, since the sequence is
bounded, we have convergence in the $\sigma$-strong* topology.
\end{proof}

\begin{lemma}\label{lemma:uniformity}
Let $A_n$ be a bounded sequence of operators in $M$ converging to $A$ in the
$\sigma$-strong* topology and let $t_0\geq0$. Then for every sequence $t_k \to
t_0$, $\xi \in H$ and $\epsilon>0$, there exists $N\in \nn$ such that for $n\geq
N$,
$$
\| \phi_{t_k}(A_n -A) \xi  \| < \epsilon, \quad \text{for all }k .
$$
\end{lemma}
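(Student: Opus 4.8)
The plan is to pass through the Schwarz inequality for completely positive maps and reduce the assertion to a statement about a single relatively weakly compact family of normal functionals. Write $B_n = A_n - A$, so that $\{B_n\}$ is bounded, say $\|B_n\| \leq C$, and $B_n \to 0$ in the $\sigma$-strong$^*$ topology. For each $k$ introduce the normal positive functional $\nu_k \in M_*$ defined by $\nu_k(X) = \langle \phi_{t_k}(X)\xi, \xi\rangle$. Since $\phi_{t_k}$ is contractive and completely positive, the Schwarz inequality $\phi_{t_k}(B_n)^*\phi_{t_k}(B_n) \leq \phi_{t_k}(B_n^*B_n)$ (used exactly as in Lemma~\ref{lemma:right-cts}) yields, upon pairing against $\xi$,
$$
\|\phi_{t_k}(B_n)\xi\|^2 \leq \langle \phi_{t_k}(B_n^*B_n)\xi,\xi\rangle = \nu_k(B_n^*B_n).
$$
Thus it suffices to produce an $N$, \emph{independent of $k$}, so that $\nu_k(B_n^*B_n) < \epsilon^2$ for all $n \geq N$ and all $k$; in other words, I must prove $\sup_k \nu_k(B_n^*B_n) \to 0$ as $n \to \infty$. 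The entire force of the lemma is this interchange of ``$\sup_k$'' with the limit in $n$.

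First I would record the two structural features of $\{\nu_k\}$ that drive the argument. The family is uniformly bounded, since $\|\nu_k\| = \nu_k(1) = \langle \phi_{t_k}(1)\xi,\xi\rangle \leq \|\xi\|^2$ by contractivity. More importantly, because $\phi$ is continuous in the point-weak operator topology and $t_k \to t_0$, we have $\nu_k(X) \to \nu_{t_0}(X)$ for every $X \in M$, where $\nu_{t_0}(X) = \langle \phi_{t_0}(X)\xi,\xi\rangle$; that is, $\nu_k \to \nu_{t_0}$ in the topology $\sigma(M_*, M)$. A weakly convergent sequence together with its limit is relatively weakly compact (by the Eberlein--\v{S}mulian theorem, say), so $\{\nu_k\} \cup \{\nu_{t_0}\}$ is a relatively weakly compact subset of $M_*$. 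This is the property that distinguishes the present situation from the general failure of joint continuity of the pairing $M \times M_* \to \cc$.

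The heart of the matter --- and the step I expect to be the main obstacle --- is to upgrade the pointwise-in-$k$ fact that $\nu_k(B_n^*B_n) \to 0$ for each \emph{fixed} $k$ (immediate, since $B_n \to 0$ $\sigma$-strong$^*$ and a single normal functional is $\sigma$-strongly continuous) into the uniform statement $\sup_k \nu_k(B_n^*B_n) \to 0$. One sees that weak convergence of the $\nu_k$ alone cannot suffice: writing $\nu_k(B_n^*B_n) = \nu_{t_0}(B_n^*B_n) + (\nu_k-\nu_{t_0})(B_n^*B_n)$, the first term tends to $0$, but the remainder pairs a weakly null family of functionals against the operators $B_n^*B_n$, and for merely weakly convergent factors such a pairing need not vanish. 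What rescues the estimate is that $B_n$ converges in the \emph{strong}$^*$ (not just weak) topology, and that the functionals range over a relatively weakly compact set: a relatively weakly compact subset $K$ of $M_*$ is equi-$\sigma$-strongly continuous, in the sense that $\sup_{\omega \in K} \omega(x_n^* x_n) \to 0$ for every bounded sequence $x_n \to 0$ in the $\sigma$-strong$^*$ topology. In the commutative model $M = L^\infty$, $M_* = L^1$ this is precisely the Dunford--Pettis theorem: relative weak compactness of $\{\nu_k\}$ is uniform integrability, and uniform integrability forces $\sup_k \int |B_n|^2\,d\nu_k \to 0$ once $|B_n|^2 \to 0$ boundedly. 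The general (non-commutative) principle I would invoke rests on Akemann's characterization of relatively weakly compact subsets of a von Neumann algebra predual in terms of orthogonal sequences of projections.

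Granting this uniform-integrability principle, the proof closes immediately: applying it to the relatively weakly compact family $\{\nu_k\}$ and the $\sigma$-strong$^*$-null bounded sequence $B_n = A_n - A$ gives $\sup_k \nu_k(B_n^*B_n) \to 0$, so there is an $N$ with $\sup_k \nu_k(B_n^*B_n) < \epsilon^2$ whenever $n \geq N$; combined with the Schwarz estimate this yields $\|\phi_{t_k}(A_n - A)\xi\| < \epsilon$ for all $n \geq N$ and all $k$, as required. The one point demanding care throughout is that the index $k$ must remain completely free until the very end, entering the argument \emph{only} through the relative weak compactness of $\{\nu_k\}$; any step that secretly fixed or bounded $k$ would beg the question the lemma is designed to answer.
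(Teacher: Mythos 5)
Your proof is correct and takes essentially the same route as the paper: the identical Schwarz-inequality reduction to showing $\sup_k \nu_k\bigl((A_n-A)^*(A_n-A)\bigr) \to 0$, followed by a uniformity principle for $\sigma(M_*,M)$-convergent sequences of normal functionals. The paper simply cites Takesaki's Lemma III.5.5, which packages exactly the statement you need (a $\sigma(M_*,M)$-convergent sequence in $M_*$ converges uniformly along bounded $\sigma$-strong$^*$-null sequences), whereas you re-derive it from relative weak compactness plus the Akemann/noncommutative Dunford--Pettis characterization --- which is precisely the mathematics underlying Takesaki's lemma.
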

\begin{proof}
Let $B_n=(A_n-A)^*(A_n-A)$, $\omega_k(X)= \< \phi_{t_k}(X) \xi, \xi \>$ and
$\omega(X)=\<  \phi_{t_0}(X) \xi, \xi \>$. Then we have that
$$
\| \phi_{t_k}(A_n - A)\xi  \|^2 = \< \phi_{t_k}(A_n -A)^*
\phi_{t_k}(A_n -A) \xi, \xi \> \leq \omega_k(B_n)
$$
since $\phi_t$ is a CP map for all $t$. Since $\phi$ is a point-$\sigma$-weakly
continuous semigroup, we have that $(\omega_k)$ is a
sequence of $\sigma$-weakly continuous linear functionals such that $\omega_k(X)
\to \omega(X)$ for all $X \in M$. Furthermore, $B_n$ is a bounded sequence
converging in the $\sigma$-strong* topology to 0. The latter holds because $A_n$
is a bounded sequence converging to $A$ in the $\sigma$-strong* topology and
multiplication is jointly continuous with respect to this topology in bounded
sets (of course * is also continuous). Finally, we obtain the desired
conclusion by applying Lemma III.5.5, p.151 of
\cite{Takesaki2002}, which states the following. Let $M$ be a von Neumann
algebra and let $\rho_k$ be a sequence in $M_*$ converging to $\rho_0 \in
M_*$ in the $\sigma(M_*, M)$ topology. If a bounded sequence $(a_n)$ converges
$\sigma$-strongly* to $0$, then $\lim_{n\to \infty} \rho_k(a_n) = 0$ uniformly
in  $k$.
\end{proof}

\section{The main result}
\begin{theorem}
Let $\phi$ be a CP-semigroup acting on a von Neumann algebra $M \subseteq B(H)$. Then for all
$\xi \in H$, $A \in M$ and $t_0 \geq 0$,
$$\lim_{t \rightarrow t_0}\|\phi_t(A)\xi - \phi_{t_0}(A)\xi\| = 0 .$$
\end{theorem}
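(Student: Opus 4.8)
The plan is to leverage the right-continuity already in hand (Lemma~\ref{lemma:right-cts}) together with the three technical lemmas, reducing the full two-sided statement to norm-continuity for elements of the domain $D(\delta)$. Since continuity at $t_0$ is equivalent to the assertion that $\phi_{t_k}(A)\xi \to \phi_{t_0}(A)\xi$ in norm for every sequence $t_k \to t_0$, I fix such a sequence once and for all. By writing $A$ as a linear combination of at most four positive elements of $M$ (real and imaginary parts followed by their positive and negative parts, all of which lie in $M$) and using linearity of each $\phi_t$, it suffices to treat the case $A \geq 0$.

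So assume $A \geq 0$ and invoke Lemma~\ref{lemma:domain-density} to obtain positive operators $A_n \in D(\delta)$ with $A_n \to A$ in the $\sigma$-strong$^*$ topology and $\|A_n\| \leq \|A\|$. The triangle inequality gives
\[
\|\phi_{t_k}(A)\xi - \phi_{t_0}(A)\xi\| \leq \|\phi_{t_k}(A_n - A)\xi\| + \|\phi_{t_k}(A_n)\xi - \phi_{t_0}(A_n)\xi\| + \|\phi_{t_0}(A_n - A)\xi\|.
\]
I would control the two outer terms simultaneously using Lemma~\ref{lemma:uniformity}: applied to the given sequence $t_k \to t_0$ it bounds $\|\phi_{t_k}(A_n - A)\xi\|$ by $\epsilon$ for all $k$ once $n$ is large, and applied to the constant sequence $t_k \equiv t_0$ it likewise bounds $\|\phi_{t_0}(A_n - A)\xi\|$. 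Thus I fix a single $n$ making both outer terms $<\epsilon$ uniformly in $k$; the role of the uniformity lemma is precisely to decouple the approximation index $n$ from the time index $k$.

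Everything then rests on the middle term, i.e.\ on showing that $t \mapsto \phi_t(B)\xi$ is norm-continuous for a fixed $B = A_n \in D(\delta)$. This is where I expect the real work to lie, and the idea is to produce a fundamental-theorem-of-calculus representation. For $\omega \in M_*$ the scalar function $f(t) = \omega(\phi_t(B))$ is continuous by point-$\sigma$-weak continuity, and since $\omega \circ \phi_t \in M_*$ the definition of $D(\delta)$ yields the right-hand derivative $f'_+(t) = (\omega\circ\phi_t)(\delta(B)) = \omega(\phi_t(\delta(B)))$, which is again continuous in $t$. A continuous function whose right derivative exists and is continuous is $C^1$, so $f(t) - f(s) = \int_s^t \omega(\phi_r(\delta(B)))\,dr$. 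Specializing to $\omega = \langle\,\cdot\,\xi, \eta\rangle$ and using Lemma~\ref{lemma:bochner-int} (with the operator $\delta(B) \in M$) to read the integral in the Bochner sense, I obtain the vector identity $\phi_t(B)\xi - \phi_s(B)\xi = \int_s^t \phi_r(\delta(B))\xi\,dr$. An indefinite Bochner integral is norm-continuous, so $t \mapsto \phi_t(B)\xi$ is norm-continuous; in particular $\|\phi_{t_k}(A_n)\xi - \phi_{t_0}(A_n)\xi\| \to 0$ as $k \to \infty$ for the $n$ fixed above.

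Combining the three estimates gives $\limsup_k \|\phi_{t_k}(A)\xi - \phi_{t_0}(A)\xi\| \leq 2\epsilon$, and letting $\epsilon \to 0$ proves the claim for the arbitrary sequence $t_k \to t_0$. The main obstacle, and the reason left-continuity cannot be read off as in the $C_0$-semigroup setting, is the moving-target limit $t \nearrow t_0$: in the difference $\phi_{t_k}(A_n) - \phi_{t_k}(A)$ both the map and its argument vary with $k$, so bare $\sigma$-weak continuity is insufficient. The two devices that overcome this are the uniformity of Lemma~\ref{lemma:uniformity} (handling the approximation error uniformly in the time variable) and the integral representation for domain elements (handling the genuinely time-dependent part), the latter being the crux.
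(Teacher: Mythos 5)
Your proof is correct, and it shares the paper's overall skeleton --- the same three-term triangle inequality, with domain approximants supplied by Lemma~\ref{lemma:domain-density} and both outer terms controlled uniformly in $k$ by Lemma~\ref{lemma:uniformity} (the paper handles the term $\|\phi_{t_0}(A_N-A)\xi\|$ implicitly, you via the constant-sequence trick; same thing) --- but it diverges at the crux, namely the middle term $\|\phi_{t_k}(A_N)\xi-\phi_{t_0}(A_N)\xi\|$. The paper gets this from the Principle of Uniform Boundedness: for $X\in D(\delta)$ the family $\{s^{-1}(\phi_s(X)-X)\}_{s>0}$ is pointwise bounded on $M_*$, hence norm bounded by the duality $M=(M_*)^*$, so $\|\phi_s(X)-X\|\leq C_X\, s$; combined with the semigroup property and contractivity of $\phi_{t\wedge t_0}$ this gives $\|\phi_{t_k}(X)-\phi_{t_0}(X)\|\leq\|\phi_{|t_k-t_0|}(X)-X\|\leq C_X|t_k-t_0|$, a Lipschitz bound in \emph{operator norm}, uniform over $\xi$. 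You instead establish the fundamental-theorem-of-calculus identity $\phi_t(B)\xi-\phi_s(B)\xi=\int_s^t\phi_r(\delta(B))\xi\,dr$ by differentiating $\omega(\phi_t(B))$ from the right (correctly using normality of $\phi_t$ so that $\omega\circ\phi_t\in M_*$), invoking the classical fact that a continuous function with continuous right derivative is $C^1$, and upgrading the scalar identity to a vector identity via Lemma~\ref{lemma:bochner-int}. This buys an explicit Lipschitz constant $\|\delta(B)\|\,\|\xi\|$ and avoids PUB, at the modest cost of the real-analysis lemma on right derivatives and of extending Lemma~\ref{lemma:bochner-int} from $[0,1]$ to an arbitrary bounded interval (the same proof works verbatim, so this is not a gap). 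One point where you are actually more careful than the paper: Lemma~\ref{lemma:domain-density} is stated only for \emph{positive} $A$ (positivity is what makes strong$^*$ convergence follow from strong convergence there), and your preliminary decomposition of $A$ into a combination of four positive elements makes its application legitimate, whereas the paper's proof invokes that lemma for a general $A\in M$ without comment.
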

\begin{proof}
Let $\epsilon >0$ be given, and let $(t_k)$ be a sequence converging to
$t_0$. By Lemma~\ref{lemma:domain-density}, there is a bounded sequence $(A_n)$
of operators $A_n \in D(\delta)$ such that $A_n
\to A$ in the $\sigma$-strong* topology. By Lemma~\ref{lemma:uniformity}, there
exists
$N \in \nn$ such that for $n \geq N$,
$$
\| \phi_{t_k}(A_n -A) \xi  \| < \frac{\epsilon}{3}, \quad \text{for all }
k\geq 0 .
$$
By an application of the Principle of Uniform Boundedness, if $X \in D(\delta)$
there exists $C_X > 0$ such that
$$
\sup_{s >0 } \frac{1}{s} \| \phi_s(X) - X \| \leq C_X <\infty  .
$$
Now notice that $A_n \in D(\delta)$ for all $n$, and in particular $\exists
C>0$ such that
$$
\sup_{s >0 } \frac{1}{s} \| \phi_s(A_N) - A_N \| \leq C  .
$$
Because $\phi_s$ is a contraction for all $s$, we obtain that for all $k$,
\begin{align*}
 \|  \phi_{t_k}(A_N)\xi - \phi_{t_0}(A_N)\xi \| & \leq \|  \phi_{t_k}(A_N) -
\phi_{t_0}(A_N) \|\; \| \xi \| \\
& \leq \|  \phi_{|t_k-t_0|}(A_N) -A_N \| \; \| \xi
\| \\
& \leq C \| \xi \| \; | t_k - t_0 | .
\end{align*}
In particular, we must have that $\| \phi_{t_k}(A_N)\xi
- \phi_{t_0}(A_N)\xi \| \to 0$ as $k \to \infty$. Thus there is $K\in \nn$
such that for $k\geq K$,
$$
\| \phi_{t_k}(A_N)\xi - \phi_{t_0}(A_N)\xi \| < \frac{\epsilon}{3} .
$$
We conclude that for $k \geq K$,
\begin{multline*}
\| \phi_{t_k}(A)\xi - \phi_{t_0}(A)\xi \| \leq
\| \phi_{t_k}(A -A_N) \xi  \| + \\
+ \| \phi_{t_k}(A_N)\xi - \phi_{t_0}(A_N)\xi \|  +
\| \phi_{t_0}(A_N -A) \xi  \| < \epsilon.
\end{multline*}
\end{proof}

\providecommand{\bysame}{\leavevmode\hbox to3em{\hrulefill}\thinspace}

\end{document}